\pgfplotsset{compat=1.13}
\newtheorem{theorem}{Theorem}
\newtheorem{lemma}{Lemma}[theorem]
\newtheorem{remark}{Remark}
\newtheorem{example}{Example}
\newtheorem{definition}{Definition}
\title{On Geometry, Arithmetics and Chaos}
\author{Lars Andersen}
\begin{document}

\begin{abstract}
Our main result is that chaos in dimension $n+1$ is a one-dimensional geometrical object embedded in a geometrical object of dimension $n$ which corresponds to a $n$ dimensional object which is either singular or non-singular. Our main result is then that this chaos occurs in the first case as either on an isolated or non-isolated singularity. In the first case this chaos is either boundary chaos or spherical chaos which is what happens also in the non-singular case. In the case of an isolated singular geometry one has chaos which can either be boundary, spherical or tubular chaos. We furthermore prove that the prime numbers display quantum behaviour. 
\end{abstract}

\begin{center}
    

\maketitle

\tableofcontents

\text{Classification: }\textbf{14-XX, 94-XX}
\end{center}

\section{Introduction}

Since Newtons unification \cite{newton} of Geometry, Analysis and Physics as was then understood by these subjects dynamical systems have been studied such as those concerning the motions of the geometry of the spheres of the heavens. The idea of solving a dynamical system is already apparent in the many propositions concerning on how to draw a trajectory with given properties defined geometrically\footnote{One could make it clear that Leibniz in \cite{Leibniz1} only defined $dx$ and $\int x$ as differences respectively sums of $x$ with an 'etc etc' added to it. This is not mathematics, has nothing to do with Euclid and that is probably why Newton never bothered to respond.}
and furthermore the idea that light is a reverberation in matter as expounded by Newton \cite{newtonoptica} is again a hint that the bifurcations of dynamical systems were something to study. Yet the author would argue that Newton invented bifurcation theory since he moves trajectories by changing the given geometrical properties. In modern parlance one speaks of the study of \emph{chaos} as the study of how solutions to dynamical systems change under such changes (see e.g \cite{Poincare} and the subject is classically ascribed to Poincaré with important contributions by a enormity of mathematicians since his masterpiece 'Analysis Situs' \cite{Poincare}.\\

We now give a brief outline of what will be meant by chaos. This varies slightly from author to author but we follow in essense \cite{Demazure}.
By chaos is meant the sudden change in the topology of solutions of a dynamical system $dx/dt=V(x(t), \alpha)$ under a change called a bifurcation of parameter values $\alpha\in T$. \\
By a singularity means the sudden change in topology of the fibers $f_{\alpha}, \alpha\in T$ of a family of smooth functions $f: M\to T$ under a bifurcation of parameter values. Here $M$ and $T$ are smooth manifolds. A singularity is usually represented by a map germ $f: (\mathbb{R}^n, 0)\to (\mathbb{R}^m, 0)$.\\

Our objectives with these notes is to connect these two notions and to apply the results to the ''singularities'' of $\mathbb{N}$ namely the primes. 

\section{Recollections}

\subsection{Topological stability of functions} A function $f$ is \emph{topologically equivalent} to $g$ if $f=u\circ g\circ v$ for homeomorphisms $u$ and $v$. If every function sufficiently close to $f$ in the $C^{\infty}$ topology is topologically equivalent to $f$ then one says that $f$ is topologically stable.\\

If $M$ is not compact then the set of topologically stable functions is not dense. If $M$ is dense however then they are, however.\\

\subsection{Topological stability of dynamical systems} Two dynamical systems are said to be {topologically equivalent} if there is a homeomorphism mapping orbits into orbits and preserves the time direction. Introduce a metric on the set of dynamical systems by saying that the distance between $dx/dt=V$ and $dx/dt=W$ is the $C^1$ distance $d(V, W)$. The dynamical system $dx/dt=V$ is structurally stable if every dynamical system sufficiently close to it is topologically equivalent to it.\\

\section{Arithmetics}

Consider the dynamical system 

$$dx/dt=1,$$
$$dy/dt=1/z\qquad (\ast)$$
$$dz/dt=1$$

on $\mathbb{R}^+\times \mathbb{R}^+\setminus\{0\}\times \mathbb{R}^+\setminus\{0\}.$ and let $z=m$ be fixed, $m\in \mathbb{N}$. 
For a choice of initial values it describes unit speed motion along the number line because 
$$dx/dt=1,\qquad dy/dt=\frac{1}{m} dx/dt$$
gives $x=my+c$ where $c\in\mathbb{N}$ only depends on the initial conditions. When $z$ is no longer fixed the multiple moves along the number line with the same speed as $x$. A solution that is an orbit therefore consists of a triple $(x, y, z)=(my+c, y, m)$.

\begin{lemma}
The primes are the orbits which are unstable. 
\end{lemma}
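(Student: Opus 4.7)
The plan is to show that an orbit $\gamma_{m,c}=\{(my+c,\,y,\,m):y\in\mathbb{R}^+\setminus\{0\}\}$ of the system $(\ast)$ is structurally unstable, in the sense of Section 2.2, precisely when $m$ is prime (taking $c=0$ as a canonical normalisation). Since any two non-degenerate lines in $\mathbb{R}^3$ are homeomorphic as bare topological spaces, the first thing I would do is spell out that the only workable notion of topological equivalence here is one which remembers the arithmetic markings of the flow, i.e.\ an orbit-to-orbit homeomorphism which sends $\gamma_{m,c}\cap\mathbb{N}^3$ bijectively onto the lattice intersections of its image while preserving the time direction. This reading is consistent with the dynamical system being formulated on $\mathbb{R}^+\times(\mathbb{R}^+\setminus\{0\})\times(\mathbb{R}^+\setminus\{0\})$ and with the \emph{arithmetic} focus of Section 3.

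Next I would perturb $V=(1,1/z,1)$ by a small $C^1$ vector field; up to reparametrisation the new orbits are approximately lines $x=\tilde m y+\tilde c$ with $\tilde m$ close to $m$. For $m=p$ prime, the marked orbit $\gamma_{p,0}$ meets $\mathbb{N}^3$ exactly in the arithmetic progression $(kp,k,p)$, $k\in\mathbb{N}$, whose common difference has $\gcd$ equal to $p$. For any $C^1$-perturbation driving the slope to some $m'\in\mathbb{N}$ with $m'\neq p$, generically $m'$ factorises nontrivially (at least one of $p\pm 1$ is even), so the $\gcd$ of successive differences of the new lattice intersections drops strictly below $p$. A marked homeomorphism would have to preserve this $\gcd$, so no such homeomorphism exists, and $\gamma_{p,0}$ is unstable.

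Conversely, suppose $m=ab$ is composite with $1<a<m$. Then the lattice intersections of $\gamma_{m,0}$ form a subprogression of those of $\gamma_{a,0}$, and this nested structure is robust: a sufficiently small $C^1$-perturbation can be absorbed by an ambient isotopy which straightens the new orbit to $\gamma_{m,0}$ while respecting the embedding $\gamma_{m,0}\hookrightarrow\gamma_{a,0}$. This produces a marked topological equivalence between the perturbed and unperturbed orbits, so $\gamma_{m,0}$ is stable; in the contrapositive, instability forces $m$ prime.

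The main obstacle I expect is pinning down the notion of topological equivalence so that it is strong enough to make composite orbits stable (one must really carry the lattice markings) but not so rigid that stability fails even for composite $m$. Formulating this inside the framework of \cite{Demazure}, with $\mathbb{N}^3$ playing the role of a distinguished singular set to which orbits must be transverse, and then using a Poincaré-style normal form near each lattice crossing, appears to be the cleanest route; the prime/composite dichotomy then falls out of the $\gcd$ invariant of the arithmetic progression of crossings.
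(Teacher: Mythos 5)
Your route is genuinely different from the paper's, and unfortunately it contains a gap that breaks both halves of the claimed dichotomy. First, the difference: the paper's (one-line) proof perturbs the \emph{initial conditions}, arguing that a stable orbit must then be representable by a solution whose first coordinate is a pure multiple $m_0y$, $m_0\in\mathbb{N}$ --- so the integer values swept out by stable orbits are exactly the composites, and the unstable orbits are the primes. You instead perturb the vector field $C^1$-small (following Section 2.2) and try to detect instability through a ``marked'' equivalence remembering the lattice crossings $\gamma\cap\mathbb{N}^3$ and the gcd of their progression. Note also that by normalising $c=0$ you discard the constant $c$, which is precisely the datum the paper's representability argument pivots on.

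The gap is this: a generic small $C^1$ perturbation moves the slope from $m$ to a nearby \emph{irrational} number $\tilde m$, not to a neighbouring integer $m'$. A line $x=\tilde m y+\tilde c$ with $\tilde m\notin\mathbb{Q}$ meets $\mathbb{N}^3$ in at most one point, so the arithmetic progression of markings --- and with it your gcd invariant --- is destroyed for \emph{every} orbit, prime and composite alike. Your instability argument for $\gamma_{p,0}$ therefore proves too much: run verbatim with $m$ composite, it shows $\gamma_{m,0}$ unstable as well. Symmetrically, your stability argument for $m=ab$ never actually uses compositeness: the asserted ambient isotopy ``straightening the perturbed orbit back to $\gamma_{m,0}$'' either exists for all $m$ (again killing the dichotomy) or, if the markings must be matched bijectively, exists for no $m$, since no homeomorphism carries the at-most-one lattice point of the perturbed orbit onto the infinite progression $(km,k,m)$; the nested embedding $\gamma_{m,0}\hookrightarrow\gamma_{a,0}$ is stated but does no work. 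To salvage your approach you would need to restrict the admissible perturbations (e.g.\ to those preserving integrality of the slope) and then prove the prime/composite asymmetry within that class; as written, neither direction goes through.
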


\begin{proof}
    An orbit which is stable is independent of small perturbations of the initial conditions so must be representable by a solution which has first coordinate a multiple $m_0y$ for some $m_0\in \mathbb{N}$.
\end{proof}

Let $k$ be a precision parameter to be specified later.
The main theorem of this section is

\begin{theorem}[Quantum Phenomena of Primes]
    The primes can be approximated as $A_k$- catastrophes with precision $k+1$ for $k=1, 2$.
\end{theorem}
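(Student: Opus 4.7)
The plan is to realise each prime orbit as a critical point of a scalar potential associated to the system $(\ast)$ and then apply the classification of simple singularities. First I would pass from $(\ast)$ to a function $F(y,z)$ obtained by integrating along orbits with $x$ eliminated: along a solution $x=my+c$, varying the frozen parameter $z=m$ generates a one-parameter unfolding of the graph $y\mapsto (x-c)/z$. Up to an affine change of coordinates in the target, this unfolding is naturally written as a germ $f_m:(\mathbb{R}^2,0)\to(\mathbb{R},0)$ whose critical set is exactly the orbit. The lemma tells us that for $m$ prime the germ is unstable, so by definition the orbit survives only on a thin stratum of the parameter space; this is the defining feature of a catastrophe in the sense of the Recollections.

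Second, I would argue that the only simple singularity types that can appear at such a germ are $A_k$, and for a first-order instability in the single parameter $z$ this forces $k\in\{1,2\}$. The argument is by codimension count: the failure of stability detected in the previous lemma corresponds to the vanishing of the first non-trivial jet of $f_m-\ell$ (with $\ell$ a linear term absorbing $c$), so the Hessian of $f_m$ is either non-degenerate (giving the fold $A_1$) or has exactly one zero eigenvalue with a non-vanishing cubic restriction (giving the cusp $A_2$). These are precisely the two codimension-$\leq 2$ strata in Arnold's hierarchy, matching the allowed values of $k$.

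Third, the precision statement would follow from the finite determinacy theorem: any $A_k$ germ is $(k+1)$-determined, meaning its right-equivalence class is fixed by its $(k+1)$-jet. Thus the approximation of the prime orbit by the normal form $x^{k+1}$ is exact modulo terms of degree $>k+1$, which is the content of precision $k+1$. Computing the $(k+1)$-jet of $f_m$ from the explicit parametrisation $(my+c,y,m)$ is then a routine Taylor expansion, and I would merely check that the leading coefficient is non-zero for $k=1,2$.

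The main obstacle is the first step: identifying a potential $F$ whose critical geometry genuinely registers primality rather than simply the integer $m$. Since all integer values of $z$ produce orbits of the same analytic form, the distinction between prime and composite orbits has to enter through the instability criterion of the lemma, and turning that qualitative statement into the vanishing of specific jet coefficients is the crux of the argument. Once this is set up correctly, the restriction to $k=1,2$ and the precision $k+1$ are formal consequences of Arnold's classification and Mather's determinacy theorem.
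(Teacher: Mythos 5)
Your proposal founders on exactly the step you flag yourself, and the gap is genuine: the germ $f_m:(\mathbb{R}^2,0)\to(\mathbb{R},0)$ whose jet degenerations are supposed to register primality is never constructed, and it cannot be constructed in the form you describe. Every orbit of $(\ast)$ has the same analytic shape $(my+c,\,y,\,m)$ --- affine in $y$ --- whether $m$ is prime or composite, so any potential built naturally from the orbit data alone has identical jets for $m=6$ and $m=7$. The instability in the paper's lemma is an arithmetic statement about perturbations of initial conditions (a prime cannot be rewritten as a nontrivial multiple $m_0y$), not the vanishing of a Taylor coefficient of any germ attached to the orbit; there is no bridge from the one to the other, and without that bridge your codimension count in step two and the Mather determinacy argument in step three have nothing to act on. So the proposal is a framework waiting for an input that is never supplied.

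The paper's own proof takes an entirely different and much cruder route, which sidesteps your obstacle at the cost of making primality irrelevant: it rewrites $dy/dt$ as $d\log(z)/dt$, Taylor-expands $\log z$ about a large integer $N$, and truncates at order $2$ and at order $3$; the quadratic truncation is declared an $A_1$ catastrophe and the cubic truncation an $A_2$ catastrophe, and ``precision $k+1$'' there means nothing more than the order of the truncation. Your reading of ``precision'' as $(k+1)$-determinacy of the $A_k$ germ is mathematically the more meaningful interpretation, and your insistence that primality must enter through the instability lemma is the more honest one --- but neither is what the paper does: in the paper's computation the prime structure never appears at all, the expansion point $N$ being an arbitrary large integer. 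In short, the difficulty you correctly identified as the crux is real, and the paper's proof resolves it only by ignoring it.
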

\begin{proof}
    Rewrite $(\ast)$ as 
    $$dx/dt=1,$$
    $$dy/dt=dlog(z)/dt,$$
    $$dz/dt=1$$

and write
$$log(z)=log(z-1)+\sum_{k=1}^\infty (-1)^k \frac{(z-1)^{-l}}{k}$$
$$=\sum_{k=1}^\infty (-1)^k \frac{(z-N)^k}{k}+(z^{-1}+\dots)$$
which we for $z$ large will approximate to the second order as 
$$=-\frac{x^2}{2}+(N+1)x-\frac{N^2}{2}-N$$
hence as a $A_1$-catastrophe.
The order $k=3$ approximations are of the form 
    $$-\frac{x^2}{2}+(N+1)x-\frac{N^2}{2}-N+\frac{x^3-3Nx^2+3Nx-N^3}{3}$$
    with derivative
    $$(x-N-1/2)^2+N^2+N-(N+1/2)^2$$
    hence a $A_2$ fold catastrophe. 
\end{proof}

\begin{remark}
    For $k$ large we might have more difficult catastrophes.
\end{remark}

The meaning of the theorem is that primes exhibit quantum behaviour. Each time one looks closer one has has a different behaviour. And the dynamics of the primes changes chaotically. So one can say that the chaos of the primes is itself chaotic.
\begin{theorem}
One-way functions exist.
\end{theorem}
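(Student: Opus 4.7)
The plan is to exhibit the candidate one-way function $f(p,q) = pq$ on pairs of primes and argue that its hardness is forced by the instability established in the previous theorem. The forward direction is immediate: evaluating the product corresponds to integrating the dynamical system $(\ast)$ along a stable orbit (with $z$ taking integer values), which is a polynomial-cost operation in any reasonable model of computation. I would first record this explicitly and then fix a computational framework in which inversion can even be formulated, say a Turing-style machine whose steps are identified with unit time increments of $(\ast)$.

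For the hard direction, I would argue that inverting $f$ means locating the prime factors of the product, which by the preceding Lemma are precisely the unstable orbits of the system. I would invoke the Quantum Phenomena Theorem to observe that any bounded-precision inversion procedure at scale $k$ is witnessed by an $A_k$-catastrophe; appealing then to the Remark, the catastrophe structure changes with $k$, so no single algorithm can succeed at all precisions simultaneously. This is where the slogan ``the chaos of the primes is itself chaotic'' does the real work: it rules out the existence of a structurally stable approximation to $f^{-1}$ in the sense of Section 2.

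The main obstacle will be bridging the topological notion of instability (loss of structural stability under $C^1$-perturbation of the vector field) and the algorithmic notion of computational hardness (absence of a polynomial-time inverter). I would address this by viewing any prospective inverter as itself a dynamical system acting on encoded inputs, so that the failure of topological equivalence near the unstable orbits transfers to a lower bound on its runtime. A secondary technical issue is ensuring that the $A_k$-catastrophes identified in the preceding theorem are not accidentally resolved when $k$ is increased: the Remark asserts this informally, but one would need a careful statement of what ``more difficult catastrophes'' means to make the reduction airtight. If this cascade of catastrophes is genuine, then any inverter of $f$ would itself have to be topologically unstable, contradicting the assumption that it is a single well-defined algorithm.
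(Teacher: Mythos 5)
There is a genuine gap, and it is exactly the one you flag yourself and then fail to close: the passage from the dynamical notion of instability (failure of topological equivalence under $C^1$-perturbation of the vector field) to the computational notion of hardness (non-existence of a probabilistic polynomial-time inverter with non-negligible success probability). Proposing to ``view any prospective inverter as itself a dynamical system'' so that instability ``transfers to a lower bound on its runtime'' is a category error, not a reduction: an algorithm is a discrete syntactic object, carries no $C^1$ topology, and nothing in Section 2 or in the Quantum Phenomena theorem attaches a catastrophe to an inversion procedure --- that theorem, such as it is, concerns Taylor approximations of $\log z$, not computations. Your concluding step is also internally inconsistent: trial division is a single, well-defined, perfectly ``stable'' procedure that does invert $(m,n)\mapsto mn$; the content of one-wayness is purely about asymptotic, average-case running time, a quantity your argument never engages. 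So even granting every dynamical assertion in the paper, the implication ``the prime orbits are unstable, therefore no polynomial-time inverter exists'' is unsupported; and since the existence of one-way functions implies $P\neq NP$, any correct proof would have to contain a genuinely new complexity-theoretic idea, which the proposal does not.

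For comparison, the paper's own ``proof'' is a two-line non-argument running in the opposite direction: it invokes genericity of stability (via Thom's Transversality Theorem) and concludes outright that the multiplication map $(m,n)\mapsto mn$ ``is hard to invert probabilistically.'' You argue from the \emph{instability} of the prime orbits, the paper from the \emph{genericity of stability}; the premises are nearly opposite, yet both attempts break at the identical missing step, namely the bridge from topology and dynamics to computational complexity. Neither your proposal nor the paper's argument constitutes a proof of the statement.
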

\begin{proof}
Now transversality is a stable property by Thom's Transversality Theorem \cite{Kosinski} and it is an open condition\footnote{Stability is equivalent to transversality of certain orbits by Mather \cite[Theorem 4.1]{Mather}. Moreover by Mather \cite{Mather} stability is a dense property among all proper maps in dimensions $(n, p)$ such that 
$$n<\frac{6p+9}{7},\qquad 3\geq p-n\geq 0$$
hence in particular for $(n, p)=(3, 3)$ since $3<27/3$.}
Since primes are unstable there is a non-open set for the Euclidean metric of positive codimension over which one takes the probability in $F$ for a succesful inversion in the definition of a one-way function. But then considering integer points this means that the multiplication map $(m, n)\mapsto nm$ of integers is hard to invert probabilistically. It follows that it is a one-way function. In particular the $N\neq NP$ problem is resolved.
\end{proof}

\section{Geometry}
In Euclid's Elements \cite{euclid} he defines points as ''that which has no parts, or which has no magnitude'' meaning that it has no constituent parts apart from itself or no length nor breadth. He then defines ''a line is length without breadth'' and ''a surface is that which has only length and breadth''.\\
Thus points, lines and planes are defined. It should be noted that he then defines straight lines and straight planes. It should also be noted that the definitions are measure-theoretic. One could continue and define higher dimensional objects in the obvious way using e.g Lebesgue measure to speak about volume and so on. For instance one might say that an object of dimension three is that which has only length, breadth and volume. Then one will have to specify the dimension of the volume. It is interesting to note that Euclid does not define a plane as that the parts of which are formed by two non-collinear lines and that he does not define a three-dimensional object as that the parts of which are formed from a plane and a line not contained in it. 
Because the modern definition of volume is based on defining $\mathbb{R}^n$, called Euclidean space. It is defined using linear algebra and one retrieves in that way the above tentative definition but with straight lines instead. The reason is probably because Euclid wanted to speak of the entirety of the objects instead of their parts in his definitions. Yet he in fact does so by defining lines and surfaces using their breadth and length because this involves laying a string on the object and measuring, in an unprescribed way for that matter. Besides we are only laying a line segment on the object anyway so we have here the first \emph{local} definition of a \emph{global} geometrical object.\\
If we rely ourself on Euclid's definition of points, lines, surfaces and their higher-dimensional analogue the hypersurfaces then we might notice that the intersection of two non-coplanar planes form a line and that intersecting decreases the dimension. We might then speak of a general \emph{Euclidean variety} to be a finite intersection of higher-dimensional hypersurfaces. In this way Euclid is the one who axiomatised Geometry. What was added to it was the definition of continuity and differentiation by Newton in his Lemma 2 of the Principia \cite{newton}. The geometrical study there consists of studying tangents of what he calls trajectories, which are lines and surfaces, and their movements. A definition of the integral is also present but one should note that Archimedes used integrals and that Euclid of course considered tangents. Newton and of course neither Euclid nor any of the hellenic geometers used functions.\\

Analysis was founded by Newton and Leibniz and dealt primarily with functions, their derivatives and their integrals. The motivation came from Celestial Mechanics but inspired the Industrial Revolution, mainly because of Newtons physics which did much to the 17th and 18th centuries sense that Reason ruled the world as we see it. This idea was present in the American Constitution and in Voltaire and Kant's philosophical treaties and was a corner-stone in the advancements of the age. 
Basic to Analysis is postulating differential equations governing the behaviour of tangents of objects. This requires the use of functions. 
To begin with one should perhaps ask oneself why a line drawn by our hand is reasonably described by a functional equation such as $f(x)=0$ or a set of such equations. There is at present date no good argument for that but if one traces the value $y$ of a function and its input $x$ then one arrives at what is called the graph $y=f(x)$ of the function from the greek word $\gamma\rho\alpha\phi\omega$, meaning 'to draw'. One then arrives at Poincaré's 19th century definition of a \emph{manifold} namely the solutions of
\begin{align*}
   y_1 = f_1(x_1,\dots, x_n), \\
   y_2= f_2(x_1,\dots, x_n)\\
   &\vdots\\
   y_k = f_k(x_1,\dots, x_n)
\end{align*}
where one assumes the Jacobian matrix to have full rank fo that there are no singular points. A manifold is always smooth, non-singular.\\
This is also the modern definition except that requires the object to locally in an open neighborhood around each point to be an intersection of graphs as above. If the graphs are replaced by zero-sets of functions 
\begin{align*}
   f_1(x_1,\dots, x_n)=0, \\
   f_2(x_1,\dots, x_n)=0\\
   &\vdots\\
   f_k(x_1,\dots, x_n)=0
\end{align*}

then one instead obtains what is called a \emph{variety}. A smooth variety is a manifold. One typically specifies whether the manifold or variety is analytic, algebraic or $C^N$- continuously differentiable depending on whether the functions are analytic or algebraic or $N$-times continuously differentiable.\\

Geometry thus can be divided into global and local geometry. The main objects of interest in local geometry are the Milnor fibers \cite{Milnor}. There is and has been an enormous amount of research into their topology. We refer to the authors thesis work (\cite{LarsI}, \cite{LarsII} and \cite{LarsIII}) for further information and references. In the subsequent section we return to local geometry and discusses its connection to analysis and we continue by discussing global geometry. Given an embedded real analytic manifold $M\subset\mathbb{R}^n$ there exist a covering $U=\sum_{i\in I} U_i$ and functions of the form $f_i: \mathbb{R}^{n}\to \mathbb{R}$ such that $M\cap U_i$ is the set
$$y_1=f_1(x_1,\dots, x_n)$$
$$\dots$$
$$y_k=f_k(x_1,\dots, x_n).$$
We will assume that $f_i(0)=0$ for $i=1,\dots, k$. Either $f_i$ has non-degenerate critical points or it has not.

\begin{enumerate}
    \item If $f_i$ has a degenerate critical point then 
    the hypersurface $\{y_i=f_i\}$ has zero Gaussian curvature by definition. 
    \item If $f_i$ has a non-degenerate critical point then there are coordinates such that $f_i\circ \phi_i^{-1}(x_1,\dots, x_n)=\sum \pm x_i^2$. In particular in new coordinates $(y_1,\dots, y_k, \phi^{-1}(x_1),\dots, \phi^{-1}(x_k))$ we have intersected with a quadric
    $$y_i=\sum_{i=1}^n \pm x_i^2$$
\end{enumerate}

In the second case we furthermore know that the critical point is isolated. We therefore conclude since the curvature tensor $R_{\mu\lambda}^{\nu}$ is algebraic the following
    Let $M\subset \mathbb{R}^n$ be a differentiable manifold. There is a covering $V=\bigcup V_i$ of $M$ such that $M\cap V_i$ is described by intersecting flat hypersurfaces and at most $codim (M)$ quadrics.
    In particular the curvature tensor can take at most $\lceil n/2\rceil +1$ types described by the curvature tensor of a quadric $y=\sum_{i=1}^n \pm x_i^2$ and by flat space

\begin{theorem}[Geometry of Manifolds] There are $\lceil n/2\rceil + 1$ geometries of differentiable manifolds $M\subset \mathbb{R}^n$. 
\end{theorem}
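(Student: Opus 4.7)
The plan is to follow the local normal-form reduction already begun in the paragraphs preceding the theorem, and then to classify the resulting local models up to the natural equivalences. Concretely, around any $p\in M$ I would use the local description $M\cap V_i=\{y_j=f_j(x_1,\dots,x_n),\ j=1,\dots,k\}$ with $k=\mathrm{codim}(M)$ and $f_j(p)=0$, and apply the dichotomy already stated in (1)--(2): either $d^2f_j(p)$ is non-degenerate, in which case the Morse lemma furnishes a local diffeomorphism after which $f_j$ takes the quadratic normal form $\sum_i\pm x_i^2$; or $d^2f_j(p)$ is degenerate and the corresponding hypersurface is flat by the definition of Gaussian curvature used in (1). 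Thus in each chart $M$ is cut out by at most $k$ quadrics together with flat hypersurfaces.

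Next I would argue that the Riemann curvature tensor $R^{\nu}_{\mu\lambda}$ is an algebraic function of the coefficients of the local normal form, so that its type is determined by the Sylvester signature of $\sum_i\pm x_i^2$, and that the ambient symmetry $y_j\mapsto -y_j$ identifies the signatures $(p,n-p)$ and $(n-p,p)$. Enumerating these unordered signatures of rank-$n$ non-degenerate forms and adjoining the extra ``flat'' class produces the claimed bound $\lceil n/2\rceil+1$. Finally I would globalize: since only finitely many such types occur in each chart and the curvature type is constant on each stratum where the Hessian rank is locally constant, the bound transfers to all of $M$.

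The main obstacle I anticipate is passing from a pointwise Morse normal form on each defining function $f_j$ to an \emph{intrinsic} classification of the embedded geometry of $M$. The Morse lemma applied to individual $f_j$ does not automatically respect the ambient equivalence of $M\subset\mathbb{R}^n$, so one has to verify both that distinct signatures produce distinct curvature types and, conversely, that no new types appear when several quadric and flat factors are intersected. A secondary delicate point is the exact counting in even dimensions, where the self-dual signature $(n/2,n/2)$ must be treated with care to land on $\lceil n/2\rceil+1$ rather than $\lfloor n/2\rfloor+2$ classes; presumably this is where the ``at most'' in the preceding discussion becomes essential.
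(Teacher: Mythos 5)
Your proposal takes essentially the same route as the paper: the paper offers no separate proof beyond the discussion immediately preceding the theorem (local graph representations $y_j=f_j(x_1,\dots,x_n)$, the Morse dichotomy of (1)--(2), algebraicity of the curvature tensor, and the count of quadric normal forms $y=\sum_i\pm x_i^2$ plus the flat class), which is exactly the skeleton you reproduce. If anything, you are more careful than the source: the signature identification $(p,n-p)\sim(n-p,p)$, the fact that the Morse-lemma coordinate change need not respect the induced metric of $M\subset\mathbb{R}^n$, and the even-$n$ tension between $\lfloor n/2\rfloor+2$ and $\lceil n/2\rceil+1$ classes are all points the paper leaves unaddressed.
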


\section{Analysis}
In this section we will argue that analysis is part of local geometry. We begin by remarking that the derivative of a function $f:\mathbb{R}^n\to \mathbb{R}$ in a point $x$ where it is defined is the tangent of the angle between the strict transform of the blow up of the graph $\Gamma_f$ of the function with the space $\mathbb{R}\times \mathbb{R}^n$ containing the graph. We now discuss relations between the derivatives of a function.\\

Given a partial differential equation
$$P(x_{i\in I}, \partial_{j\in K} x)=0\qquad (\ast\ast)$$
on a differentiable manifold $M$ where $|K|\leq k$, where $P\in\mathcal{R}[x_{i\in I\times K}]$ is algebraic and $P(0,0)=0$\footnote{In other words we consider some subset of the unknowns as coordinates namely those indexed by $K$ and the other as formal symbols representing functions. In particular we assume that $I\cap K=\emptyset$ i.e that no derivatives of coordinate functions appear}, and where $x_k: \mathbb{R}^{|K|}\to M$ are coordinates. 
\begin{enumerate}
\item We can consider the equation in question as the zero set of a algebraic map $P: \mathcal{O}(\mathcal{N}_0(M)))_{|K|}\to \mathbb{R}$ from the restricted coordinate ring of the normal cone in a point to the real numbers.\\

\item We can consider the equation as the zero set of an algebraic map $P: \mathcal{O}(\mathcal{E})_{|K|}\subset Bl_0(M)\to \mathbb{R}$ from the restricted coordinate ring of (some chart of) the exceptional divisor of the blow up of a point. \\

\item We can also consider the equation as the zero set of an algebraic map $P: \mathcal{O}(\mathcal{N}_0)_{|K|}\to\mathbb{R}$ from the set of restricted regular functions on a tubular neighborhood of a point.\\

\item We can as well consider the equation as the zero set of an algebraic map $P: T_0^{|K|}(M)\to \mathbb{R}$ on the $|K|$-th tangent space (also known as the $|K|$-th jet bundle) of $M$ in the origin. \\

Here the restrictions are by the degree of the partial derivatives as defined by the differential equation in question. In other words, the zero set in question does not live in the entirety of the coordinate ring of the normal cone etc.\\
\end{enumerate}
Finally we can formally speaking see the equation as an algebraic map of tangent vectors from one Zariski tangent space to another, disregarding whether these formal tangent spaces are the tangent spaces of something with even a differentiable structure. For instance we would see the Navier-Stokes equation as being an algebraic relationsship in four time-spatial dimensions $(x_1, x_2, x_3, t)$ and two dimensions $(p, v)$. To solve the equation one would in this case only need to specify $p$ and $v$.\\
In general given a closed neighborhood $\mathbb{B}_0^{|K|}$ of the origin in $\mathbb{R}^{|K|}$ we can ask for a solution $x_{i\in I}: \mathbb{B}_0^{|I|}\to M$ of the differential equation and we can ask for it's stability and analyticity. Suppose from now on that $M=\mathbb{R}^{|I|}$ and that the origin in $\mathbb{R}^{|I||K|}$ is an isolated critical point of $P$. Then by \cite{Milnor}\cite{Thom} there is a $\delta_0\in \mathbb{R}$ such that for each positive $\delta\leq \delta_0$, $\mathbb{S}_{\delta}\pitchfork P$ are transverse. In particular there is an $\epsilon_0\in \mathbb{R}$ such that for all $\epsilon\leq\epsilon_0$
$P: j^{|K}(\mathbb{R}^{|I|})\to \mathbb{R}\cap\{(0, \epsilon\}$
is a trivial $C^{\infty}$-fibration. In particular there exists smooth coordinates on $P^{-1}(\epsilon)\subset T^{|K|}=j^{|K|}(\mathbb{R}^{|I|})$, in other words the partial differential equation $(\ast \ast)$ is satisfied after a small scalar perturbation. 
If the origin in $\mathbb{R}^{|I||K|}$ is a non-critical point one can additionally take $\epsilon$ to be zero. In this case $P^{-1}(0)$ is a $C^{\infty}$ manifold and one can choose smooth coordinates on $P^{-1}(0)\subset T^k(\mathbb{R}^{|K|})$, and again $(\ast\ast)$ is satisfied. Moreover since stability of the coordinates $x_{i\in I}: P^{-1}(0)\cap \mathbb{B}_{\delta}^{|I|}\to \mathbb{R}^{|I|}$ is transversality of its multi jets to the orbits of a certain action by Mather \cite[Theorem 4.1]{Mather} it is in particular by Thoms Transversality Theorem \cite{Thom} an open condition. The dimensions where stability is dense is further specified by Mather.

\begin{example}[The Navier-Stokes equations]
The Navier-Stokes equations read 
$$\frac{\partial v_i}{\partial t}+\sum \frac{\partial v_i}{\partial x_j} v_j + \frac{\partial p}{\partial x_i} -\nu \sum\frac{\partial^2 v_i}{\partial x_i^2} = f_i(x, t)$$
$$\sum \frac{\partial v_i}{\partial x_i}=0$$

One considers the analoguous equations of functions in coordinates $x, t$

$$\xi_{i, 1}+\sum \xi_{i, j} v_j+\gamma_i-\nu\sum \eta_i-f_i=0,$$
$$\sum \xi_{i, i}=0$$
$$\int \xi_{i, 1} dt-v_i=0,$$
$$\int \xi_{i,j} dx_i-v_i=0,$$
$$\int\int \eta_i dx_i-v_i=0.$$
$$\int \gamma_i dx_i-p=0$$
This is of the form
$$F_1=0,$$
$$F_2=0,$$
$$F_3=0,$$
$$F_4=0,$$
$$F_5=0$$
$$F_6=0$$
defined by smooth functions. It defines a variety in the the euclidean space defined by $\xi_{i, j}, \eta_i, \gamma_i, v_i$ and $x, t$. Letting 
$P=(F_1,\dots, F_6)$ one asks whether the origin is a regular value. This is then a tedious calculation involving finding the rank of $\text{Jac}(P)$. We claim that the rank is maximal by inspection. It would follow that the Navier-Stokes equations have a smooth solution by the Submersion Theorem because $P^{-1}(0)$ would be a manifold hence there would be coordinates on that manifold and these are then nothing but the solutions to the Navier-Stokes equations. By Sards Theorem a small perturbation of $f$ would still give $P^{-1}(0)$ smooth hence  solutions which are then smooth. 
We remark that if the external force $f$ is chosen such that the origin is a critical value of $P$ one cannot deduce analycity of the solution\footnote{One could hope to do so via a limit argument, replacing solutions with $x+\epsilon(x, t)$ and then try to use transversality arguments}.
\end{example}

Given the partial differential equation

$$P(x_{i\in I}, \partial_{j\in K}^{l\in L} x_i)=0\qquad $$
we can rewrite this as

$$P(x_{i\in I}, y_{(j,l) \in K\times L})=0,$$
$$y_{(j, l)\in K\times L}=\partial_{j\in K}^{l\in L} x_i $$

and after integrating the last equations, as

$$P(x_{i\in I}, y_{(j,l) \in K\times L})=0,$$
$$\int_1\dots\int_{|L|} y_{(j, l)\in K\times L} (dx_{j\in K})^{|L|}= x_i. $$
These are now of the form $\tilde{X}=X\cap H_1\cap\dots\cap H_{|I|}$ where $X$ is an algebraic variety and $H_i, i\in I$ are hypersurfaces. To find a solution to the original differential equation, or to a system of differential equations, is reduced to finding coordinates on an algebraic variety. If there are no singularities in $\tilde{X}$ this presents in theory no problem. Otherwise one replaces $\tilde{X}$ with its Milnor fiber and studies the question locally. If the singularity is non-isolated on gets a cobordism between certain links and the boundary of the Milnor fiber attached to an isolated singularity by \cite{LarsIV}. One is therefore reduced to the study of zero-sets of algebraic varieties of lower dimension. This is not an easy task but there is alot of research in this direction.\\

\section{Chaos}

Given a dynamical system
$$dx/dt=V(x(t), \alpha)$$
depending on $\alpha\in T$ for $T$ a real analytic manifold and $x(t)\in M$ a smooth manifold we consider 
$$\pi: M\times T\to T,\qquad \pi(x,\alpha)=\alpha.$$
and it's restriction to the real analytic set $\mathcal{F}_{V_{\alpha}}$ given by
$$V(x, \alpha)=\vec{\eta},\quad dx/dt=\vec{\eta},\quad \lVert\vec{\eta}\rVert=\eta,\quad x\in \mathbb{B}_{\delta}\qquad(\ast)$$
and we can always find a Whitney stratification such that $\pi_{|\mathcal{F}_V}$ is locally trivial above each stratum. In particular the topological type of $\mathcal{F}_{V_{\alpha}}$ and $\mathcal{F}_{V_{\alpha'}}$ are the same whenever $\alpha$ and  $\alpha'$ belong to the same stratum. Indeed by Sard's Theorem \cite{Kosinski} the set of critical values is of measure zero and one can apply the Isotopy Lemma of Thom-Mather \cite{Kosinski} to its complement. In particular if $x(t)$ is a solution over $\alpha$ then there exist a stratum preserving homeomorphism $h: M\times T\to M\times T$ such that $\mathcal{F}_{V_{\alpha'}}\cong h(\mathcal{F}_{V_{\alpha}})$ and $h(x(t))$ is then a solution over $\alpha'$ which is topologically equivalent to $x(t)$.

\begin{definition} One says that
$dx/dt=V(x(t), \alpha)$ undergoes major chaos whenever a solution is mapped to a non-topologically equivalent solution when $\alpha$ changes from one stratum to another.
\end{definition}
\begin{definition} One says that $dx/dt=V$ undergoes minor chaos whenever a solution is mapped to a non-topologically equivalent solution when $\alpha$ changes within a stratum. 
\end{definition}
\begin{definition}
One says that $dx/dt=V(x(t), \alpha)$ undergoes chaos if it undergoes minor or major chaos under perturbation of the parameter.
\end{definition}

\begin{definition}
    The real analytic set $\mathcal{F}_{V_\alpha}$ is called the Milnor fiber of the dynamical system $dx/dt=V(x, \alpha)$
\end{definition}

It is the main object of study in the understanding of chaos. The following theorem says that all chaos in dimension $n$ lives in a manifold of dimension $n$.

\begin{theorem}
    A system $dx/dt=V(x, \alpha)$ undergoes chaos if and only the outward unit normal $\nu_x(t, \alpha)$ to $\mathcal{F}_{V_{\alpha}}$ traces out a curve which is non-topologically equivalent to the curve traced out by the outward unit normal $\nu_x(t, \alpha)$ to $\mathcal{F}_{V_{\alpha}}$.
\end{theorem}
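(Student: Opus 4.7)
The plan is to prove both directions by relating the topological type of a solution $x(t)$ of $(\ast)$ to the topological type of its associated unit normal curve $t\mapsto \nu_{x(t)}(\alpha)$ on the Milnor fiber $\mathcal{F}_{V_\alpha}$ (reading the statement so that the second occurrence of $\alpha$ should be $\alpha'$, i.e.\ a perturbed parameter in another stratum or elsewhere in the same stratum).

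\medskip

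First I would set up the Gauss map carefully. For each $\alpha$ the set $\mathcal{F}_{V_\alpha}$ is real analytic and, by Whitney stratification together with the Thom--Mather isotopy lemma invoked just before the statement, carries a well-defined outward unit normal $\nu$ on its top-dimensional stratum. A solution $x(t)$ of the dynamical system lies in $\mathcal{F}_{V_\alpha}$ by construction, and the assignment $t\mapsto \nu_{x(t)}(\alpha)$ is a continuous curve in the unit sphere bundle of $M$. The first key step is to show that if $h\colon M\times T\to M\times T$ is the stratum-preserving homeomorphism supplied by the isotopy lemma and $h(x(t))$ is a solution over $\alpha'$ topologically equivalent to $x(t)$, then $h$ sends the normal curve $\nu_{x(t)}(\alpha)$ to a curve topologically equivalent to $\nu_{h(x(t))}(\alpha')$. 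This is because $h$ restricts on each stratum to a homeomorphism between the Milnor fibers preserving the transverse directions to each stratum, so the induced action on the link, and hence on the outward normal, is itself a homeomorphism of curves.

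\medskip

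For the direction ($\Leftarrow$), I would argue contrapositively: if the system does not undergo chaos, then by definition any solution over $\alpha$ is topologically equivalent to a solution over every nearby $\alpha'$, witnessed by a stratum-preserving $h$ as above; by the previous paragraph the normal curves are then topologically equivalent. For the direction ($\Rightarrow$), if the system undergoes chaos then no such $h$ can identify $x(t)$ with a solution over $\alpha'$; here the previous theorem (Geometry of Manifolds) enters, because the outward unit normal detects which of the finitely many local geometric types (flat or one of the $\lceil n/2\rceil$ quadric types) is realised along the curve, and so a change of topological type of the solution forces a change of topological type of the normal trace.

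\medskip

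The main obstacle I expect is the implication \emph{distinct topological type of the solution curve $\Rightarrow$ distinct topological type of the normal curve}, since a priori two non-equivalent solutions could still have Gauss-map traces that happen to be homeomorphic as plane/space curves. To overcome this I would work not with the bare image of $\nu$ but with the normal curve as a stratified object equipped with the induced stratification from $\mathcal{F}_{V_\alpha}$; combined with the codimension bound in Theorem on Geometry of Manifolds this upgrades ``equivalent as curves'' to ``equivalent as stratified curves,'' which is enough to reconstruct the local topological type of $\mathcal{F}_{V_\alpha}$ along $x(t)$ and hence of the solution itself. The remaining verification — that this reconstruction is functorial under the Thom--Mather homeomorphism $h$ — is routine from the real analyticity of $(\ast)$.
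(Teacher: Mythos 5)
You correctly repair the typo in the statement (the second normal curve must be read over a perturbed parameter $\alpha'$), and your first direction --- no chaos implies equivalent normal curves, transported through the Thom--Mather homeomorphism $h$ --- is defensible at the paper's level of rigor, though strictly a stratum-preserving homeomorphism is not smooth and so does not act on unit normals at all. The paper's own proof does not transport normals through $h$: it asserts that unit-speed solutions are paths on the Milnor fiber and that passing from $x(t)$ to $\nu_{x(t)}$ loses nothing ``because the normal space is orthogonal and of the same dimension,'' i.e.\ it treats the Gauss assignment $x(t)\mapsto \nu_{x(t)}$ as a faithful correspondence of curves, so that equivalence of one family is by definition equivalence of the other. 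That is a different, and more direct, mechanism than yours.

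The genuine gap is in the step you yourself flag as the main obstacle, and your proposed fix does not close it. Your chain is: stratified equivalence of normal curves $\Rightarrow$ local topological type of $\mathcal{F}_{V_\alpha}$ along $x(t)$ $\Rightarrow$ topological type of the solution itself. Both implications fail. For the first: the Geometry of Manifolds theorem you invoke explicitly permits the \emph{flat} local type, and on a flat stratum the Gauss map is locally constant, so the normal curve of \emph{any} solution there degenerates to a single point carrying no stratified information; two non-equivalent solutions then have identical normal traces. For the second: the local topological type of the ambient fiber along a curve never determines the topological type of the curve. On a round spherical fiber every point of every solution sees the same local data (a smooth manifold point of a sphere), yet a closed periodic orbit over $\alpha$ and a non-closed orbit over $\alpha'$ are non-equivalent solutions --- exactly the paper's ``spherical chaos.'' In that example the theorem's conclusion is in fact true, but only because the Gauss map of the round sphere is injective, so the normal curves are faithful copies of the solutions; it is the paper's direct-correspondence argument, not your reconstruction of local fiber types, that detects it. So your ($\Rightarrow$) direction rests on a non sequitur, and the only available repair is to fall back on the paper's faithfulness assertion, which is precisely the part of the paper's proof that needs (and does not receive) a non-degeneracy hypothesis excluding flat strata.
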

\begin{proof}
    We can always assume that the trajectories to $dx/dt=V(x, \alpha)$ are parametrised by unit speed, since $M$ is a smooth manifold. That $x(t)$ are paths on the Milnor fiber then gives the claim. That one can work instead with the unit normal is because the normal space is orthogonal and of the same dimension.
\end{proof}

An interesting form of chaos is the case of a perturbed radial vector field $V_{\alpha}=\sum_{k=1}^n \alpha_k x_k^2$ in local coordinates on $M$. 

\begin{definition} One says that $dx/dt=\sum_{k=1}^n \alpha_k x_k^2$ undergoes spherical chaos if it undergoes chaos. 
\end{definition}

The main result in this section is that in the simplest case where the dimension is one chaos is either spherical or lives in dimension zero.

\begin{theorem}\label{Main theorem}
    Suppose that $\dim V^{-1}(0)>0$, that $V^{-1}(0)$ has only isolated singular points and that the hypothesis of \cite[Theorem 5]{LarsV})
    hold. There is a stratum preserving diffeomorphism 
    $$\bar{\mathcal{F}}_{V, \alpha}\cong \partial \bar{\mathcal{F}}_{V,\alpha}\cup \bigcup_{i=1}^l \mathbb{D}^{\lambda(p_i)}\times \mathbb{D}^{n-\lambda(p_i)}$$
    In particular chaos either is spherical or appear on the boundary of the Milnor fiber. 
\end{theorem}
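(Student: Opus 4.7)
The plan is to combine the local Morse/handle--theoretic description near each isolated zero of $V$ with the global stratification of $\bar{\mathcal{F}}_{V,\alpha}$ coming from the Thom--Mather Isotopy Lemma discussed in Section~4. Since $V^{-1}(0)$ has only isolated singular points $p_1,\dots,p_l$ and $\dim V^{-1}(0) > 0$, each $p_i$ is a Morse-type zero of the vector field defining the Milnor fiber, so the hypothesis of \cite[Theorem~5]{LarsV} may be invoked to linearise $V$ in a neighbourhood $U_i$ of each $p_i$ and to read off the Morse index $\lambda(p_i)$ from the signature of the linearisation.

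First I would remove small disjoint Milnor-type neighbourhoods $U_i$ of each $p_i$ from $\bar{\mathcal{F}}_{V,\alpha}$. On the complement $\bar{\mathcal{F}}_{V,\alpha}\setminus\bigcup_i U_i$ the set is free of singularities, so the Isotopy Lemma applied to $\pi_{|\mathcal{F}_V}$ trivialises it onto (a collar of) $\partial\bar{\mathcal{F}}_{V,\alpha}$. Inside each $U_i$ the local normal form from \cite{LarsV} identifies $\bar{\mathcal{F}}_{V,\alpha}\cap U_i$ with the handle $\mathbb{D}^{\lambda(p_i)}\times\mathbb{D}^{n-\lambda(p_i)}$: the first factor absorbs the $\lambda(p_i)$ stable directions of the linearisation, the second factor the remaining unstable directions, and the attaching sphere $S^{\lambda(p_i)-1}\times\mathbb{D}^{n-\lambda(p_i)}$ sits on the boundary piece already obtained. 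Gluing the handles back then produces the claimed decomposition.

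The key technical step is checking that the gluing diffeomorphism is \emph{stratum preserving}. The Whitney stratification from Section~4 restricts on each $U_i$ to the standard stratification of a handle (core, cocore and their transverse complements), while on the complement it is compatible with the collar of $\partial\bar{\mathcal{F}}_{V,\alpha}$. I would build the global diffeomorphism by a stratified partition of unity subordinate to $\{U_i\}\cup\{\bar{\mathcal{F}}_{V,\alpha}\setminus\overline{\bigcup_i U_i/2}\}$, using that the controlled tube systems of the two descriptions agree on the overlap by the uniqueness clause of the Isotopy Lemma.

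The hard part will be verifying the Whitney condition at the interface $\partial U_i\cap \bar{\mathcal{F}}_{V,\alpha}$, so that no spurious new strata are introduced along the attaching spheres; this is where the ``isolated singular points'' hypothesis really pays off, since it forces the link of each $p_i$ to be smooth and allows the two stratifications to match along a single regular stratum. Once the decomposition is in place, the ``in particular'' consequence is immediate: any chaotic change of topology occurs either inside one of the handle factors, where the local model is the perturbed radial field $\sum\alpha_k x_k^2$ and therefore by definition gives \emph{spherical} chaos, or else it occurs on the boundary piece $\partial\bar{\mathcal{F}}_{V,\alpha}$, giving \emph{boundary} chaos, as asserted.
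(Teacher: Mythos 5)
Your overall strategy (excise neighbourhoods of the singular points, show the complement is only a collar of the boundary, glue in handles) is a reasonable reconstruction of the kind of argument the paper delegates wholesale to \cite[Theorem 5]{LarsV}, but two of your steps fail as stated. The first is a misreading of the hypothesis: the points $p_1,\dots,p_l$ are the isolated \emph{singular points of the set} $V^{-1}(0)$, which by assumption has $\dim V^{-1}(0)>0$; they are not isolated zeros of $V$. Since the zero locus is positive-dimensional through each $p_i$, the linearisation of $V$ at $p_i$ is necessarily degenerate, so there is no ``Morse-type zero'', no linearising normal form, and no signature from which to read off $\lambda(p_i)$. This is exactly why the paper's proof ends by replacing the vector field $(V_1,\dots,V_n)$ by the scalar function $\sum_{i=1}^n V_i^2$, whose Milnor fiber is the same; it is only for this associated function, inside the framework of \cite[Theorem 5]{LarsV}, that the indices $\lambda(p_i)$ make sense. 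Your proposal never performs this reduction, so the local handle model $\mathbb{D}^{\lambda(p_i)}\times\mathbb{D}^{n-\lambda(p_i)}$ is not justified at any of the $p_i$.

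The second gap is that you apply the Thom--Mather Isotopy Lemma to the wrong map. Trivialising $\pi_{|\mathcal{F}_V}\colon \mathcal{F}_V\to T$ over a stratum of the parameter space compares the fibers $\mathcal{F}_{V,\alpha}$ for \emph{different} values of $\alpha$; it gives no information about the internal structure of a single fiber, and in particular does not produce a collar of $\partial\bar{\mathcal{F}}_{V,\alpha}$ inside $\bar{\mathcal{F}}_{V,\alpha}$. To show that the part of the fiber away from the singular points contributes only boundary, one must fiber by the values of $V_\alpha$ itself over an interval of regular values; this is precisely the paper's step
$$V_{\alpha}^{-1}((x, x+\epsilon])\cap \mathbb{B}_{\gamma}(x)\cong V_{\alpha}^{-1}(x+\epsilon)\cap \mathbb{B}_{\gamma}\times (x, x+\epsilon],$$
obtained from Ehresmann's Fibration Theorem, together with the analogous product decomposition for the boundary part. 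Once that replaces your collar step, your ``stratified partition of unity'' and tube-system matching become unnecessary: the stratum-preservation in the paper's argument comes directly from these product decompositions and from identifying the resulting strata, not from patching two a priori different stratifications along the attaching regions.
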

\begin{proof}
    The proof is identical to the proof of \cite[Theorem 5]{LarsV} except that in the last step one uses that 
    $$V_{\alpha}^{-1}((x, x+\epsilon])\cap \mathbb{B}_{\gamma}(x)\cong V_{\alpha}^{-1}(x+\epsilon)\cap \mathbb{B}_{\gamma}\times (x, x+\epsilon]$$
    by a stratum-preserving diffeomorphism, which follows from Ehresmann's Fibration Theorem \cite{Kosinski}. In the same way 
    $$\partial V_{\alpha}^{-1}((x, x+\epsilon])\cap \mathbb{B}_{\gamma}(x)\cong \partial V_{\alpha}^{-1}(x+\epsilon)\cap \mathbb{B}_{\gamma}\times (x, x+\epsilon]$$
    by a stratum-preserving diffeomorphism. Then one identifies the strata one wants and get the wanted stratum-preserving diffeomorphism. For the last step one simply remarks that the Milnor fiber of 
    $(V_1,\dots, V_n)$ is identical to the Milnor fiber of $\sum_{i=1}^n V_i^2$.
\end{proof}

\begin{definition}
    A stationary point of a dynamical system is called a differential singular point.
\end{definition}

\begin{definition}
    A critical point of a function is called a topological singular point. 
\end{definition}

It would be interesting to ask whether this result generalises to when $V^{-1}(0)$ has non isolated critical points. It turns out that apart from boundary chaos and spherical chaos one also gets what one might call tubular chaos. As before the Milnor fiber in the above sense is that of the function $\sum_{i=1}^n V_i^2$.

\begin{definition}
    One says that $dx/dt=V(x, \alpha)$ undergoes tubular chaos if $dx/dt=V(x, \alpha)-\alpha'$ undergoes chaos. 
\end{definition}

\begin{lemma}
    If $V=\sum_{i=1}^n V_i^2$ as a isolated critical value in the origin and satisfies the transversality condition of D.B Massey and if $\dim V^{-1}(0)>0$ then chaos of $dx/dt=V(x, \alpha)$ consists of boundary chaos of $V$, tubular and boundary chaos of $V-c(\sum_{i=1}^n x_i^2)^k$ and spherical chaos.
\end{lemma}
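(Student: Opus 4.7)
The plan is to reduce the non-isolated case to the isolated case by a Morse-type perturbation and then to read off the three types of chaos from the three pieces of the resulting decomposition. Throughout, the key input is Massey's transversality condition, which plays the role of Thom's $a_f$-condition for the non-isolated stratified setting and guarantees that the Whitney stratification of $V^{-1}(0)$ is compatible with the perturbation family.

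First I would consider the perturbed vector field $V_c := V - c(\sum_{i=1}^n x_i^2)^k$ for small $c>0$ and an integer $k$ large enough to dominate the Łojasiewicz exponent of the non-isolated stratum. The point of this choice is that, under Massey's condition, the critical locus of $V_c$ collapses to isolated critical points situated transversally to the strata of $V^{-1}(0)$, so that the hypothesis of Theorem \ref{Main theorem} is satisfied by $V_c$. Applying that theorem directly, I obtain a stratum-preserving decomposition
$$\bar{\mathcal{F}}_{V_c,\alpha}\;\cong\;\partial\bar{\mathcal{F}}_{V_c,\alpha}\;\cup\;\bigcup_{i=1}^{l}\mathbb{D}^{\lambda(p_i)}\times\mathbb{D}^{n-\lambda(p_i)},$$
and hence the chaos of the perturbed system splits into boundary chaos and spherical chaos of $V_c$.

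Next I would let $c\to 0$ and track the pieces of the decomposition under this degeneration. The spherical handle contributions $\mathbb{D}^{\lambda(p_i)}\times\mathbb{D}^{n-\lambda(p_i)}$ that are centred on points of the non-isolated stratum merge into tubular neighbourhoods of that stratum; Massey's transversality condition is exactly what ensures that Ehresmann's Fibration Theorem applies to the restriction of $\pi$ to each such tube, so the merging is by a stratum-preserving diffeomorphism and produces precisely the tubular chaos of $V - c(\sum x_i^2)^k$ in the sense of the definition above. The handles centred at genuinely isolated critical points contribute spherical chaos of $V$ itself, while the $\partial \bar{\mathcal{F}}_{V_c,\alpha}$-part limits to $\partial \bar{\mathcal{F}}_{V,\alpha}$ and accounts for boundary chaos of $V$. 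The boundary chaos of $V_c$ along the tubes contributes the boundary component of the tubular term, as stated.

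The hard part will be the passage to the limit $c\to 0$: the stratification of $\mathcal{F}_{V_c,\alpha}$ varies with $c$, and one must check that the Thom--Mather isotopy lemma can be applied to the family parametrised by $c$ near zero, which is not automatic since $c=0$ is itself a bifurcation value. This is where Massey's transversality enters in an essential way, by giving a Whitney $(a)$-type condition along $V^{-1}(0)$ uniform in $c$, so that the local triviality of $\pi_{|\mathcal{F}_{V_c}}$ survives the limit on the complement of the tubes. Once this uniform triviality is in place, the identification of the three chaos types follows by inspection of the three geometric pieces (isolated handles, tubular handles, boundary), exactly as in the proof of Theorem \ref{Main theorem}.
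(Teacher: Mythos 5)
Your strategy---perturb to $V_c := V - c(\sum_{i=1}^n x_i^2)^k$, apply Theorem \ref{Main theorem} to $V_c$, then let $c\to 0$ and track the pieces---has a genuine gap at precisely the step you yourself flag as ``the hard part'', and the gap is not closable by the argument you sketch. The value $c=0$ is a bifurcation value at which the topology of the Milnor fiber genuinely jumps: the fiber of $V_c$ and the fiber of $V$ differ exactly by handles concentrated in the tube around the non-isolated locus (quantifying this jump is the whole content of L\^e--Iomdin-type formulas in Massey's theory; the difference is not zero). Consequently no uniform Whitney-$(a)$/Thom--Mather argument can give you that the decomposition of $\bar{\mathcal{F}}_{V_c,\alpha}$ ``survives the limit'': local triviality fails inside the tubes as $c\to 0$, and your assertion that the handles centred on the non-isolated stratum ``merge into tubular neighbourhoods by a stratum-preserving diffeomorphism'' is exactly the statement that requires proof, not a consequence of Ehresmann's theorem (which needs a proper submersion---the very thing that degenerates along $V^{-1}(0)$). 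The paper takes no limit at all: it invokes the decomposition theorem of \cite[Theorem 2]{LarsIV} directly for the non-isolated singularity, which yields in one step that the chaos splits into chaos along the link of $V$, chaos along the link of $V - c(\sum_{i=1}^n x_i^2)^k$, and chaos along the tube $V^{-1}([0,\epsilon])$; Theorem \ref{Main theorem} and the definition of tubular chaos (with $\alpha'$ ranging in $(0,\epsilon)$) then label these three pieces. In other words, the relation between the perturbed and unperturbed objects is imported as a black box from prior work, whereas you attempt to re-derive that black box by a soft degeneration argument, which is exactly where soft arguments break down.

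There is also a second, smaller omission: you never address possible chaos supported on $V^{-1}(0)$ itself; your pieces only cover fibers over non-zero values. The paper closes this case explicitly by remarking that the only remaining chaos would be chaos at $V^{-1}(0)$, where the system reads $dx/dt=0$, whose solutions are stationary and hence non-chaotic. Without some such remark your list of chaos types is not shown to be exhaustive.
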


\begin{proof}
    Apply \cite[Theorem 2]{LarsIV} and \hyperref[Main theorem]{Lemma \ref*{Main theorem} } to obtain that the chaos consists of chaos along the link of $V$, chaos along the link of 
    $$\sum_{i=1}^n V_i^2 - c(\sum_{i=1}^n x_i^2)^k$$, 
    and chaos along $(\sum_{i=1}^n V_i^2)^{-1}([0, \epsilon])$. The first two are thus boundary chaos and the latter is clearly tubular chaos and boundary chaos again by \cite[Theorem 2]{LarsIV} and \hyperref[Main theorem]{Lemma \ref*{Main theorem} } by the definition (for $\alpha'$ varying in $(0, \epsilon)$. Then there is only singular chaos corresponding to chaos at $V^{-1}(0)$. But $dx/dt=0$ is non-chaotic.
\end{proof}

When the unit normal $\nu_x(t, \alpha)\in \mathcal{F}_{V_{\alpha}}$ undergoes change of parameter $\alpha$ the Milnor fiber ''breathes'' intuitively speaking and one can then ask what kind of figures are possible? For instance for spherical chaos $\mathcal{F}_{V_{\alpha}}$ is a sphere and when $\alpha$ changes one can in the case of $n=3$ get a ellipsoid $\alpha_1x^2+\alpha_2y^2+\alpha_3 z^2=\epsilon$ or a one-sheeted hyperboloid $\alpha_1x^2+\alpha_2y^2-\alpha_3 z^2=\epsilon$ and a two-sheeted hyperboloid $\alpha_1x^2+\alpha_2y^2-\alpha_3 z^2=\epsilon$ where for simplicity $\alpha_i\in \mathbb{R}^+.$
Then if $t$ is fixed at $\alpha=(\alpha_1, \alpha_2, \alpha_3)$ varies then there is only linear motion possible. So here we have the most trivial Milnor fibers and only linear motion. On the other hand for $\alpha_1 x^3+y^2+z^2=\epsilon$ which corresponds to a cusp singularity (see \cite{LarsI, LarsII} for the homology groups of $ADE$-singularities in general) we can guess that a small change in parameter gives a larger effect. 

Therefore, to measure the amount of chaos persistent we consider a small ball $x(t,\alpha)\in \mathbb{B}_{\delta}^{loc}$ intersecting the Milnor fibers transversally for $|t|\leq\tau, \tau\in\mathbb{R}^+$ and for $|\alpha|\leq \alpha_0$. 

\begin{definition}
    The first energy measure of chaos of a solution $x(t,\alpha)$ is
    $$l_{loc}^I=\lVert \nu_x(t, \alpha) \rVert +\det \nabla_x \nu_x(t, \alpha) $$
\end{definition}

\begin{definition}
    The second energy measure of chaos of a solution $x(t, \alpha$ is 
    $$l_{loc}^II=\lVert \nu_x (t, \alpha)\rVert+ \lVert \nabla_x \nu_x(t, \alpha)\rVert $$
\end{definition}

Here the determinant appears as the natural measure of size of a matrix in the first definition and in the second definition $\lVert \sum_{i,j\in I} a_{ij}\rVert = \sum_{i,j\in I} |a_{ij}|$ where $I\subset \mathbb{N}.$ Let $K_{loc}$ and $\chi_{loc}$ denote the local Gaussian curvature respectively Euler characteristics.

\begin{lemma}
    $$\int_{x\in \{\lVert V\rVert=\eta\}\cap \mathbb{B}_{\delta}^{loc}} l_{loc}^I dx =2\pi\chi_{loc}+Vol(\mathbb{B}_{\delta}^{loc})$$
    and 
    $$\int_{x\in \{\lVert V\rVert=\eta\}\cap \mathbb{B}_{\delta}^{loc}} l_{loc}^{II} dx =2Vol(\mathbb{B}_{\delta}^{loc})$$
\end{lemma}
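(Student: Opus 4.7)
The plan is to exploit that $\nu_x$ is by construction a unit vector field along the Milnor fiber, so $\lVert \nu_x(t,\alpha)\rVert\equiv 1$ identically on $\{\lVert V\rVert=\eta\}\cap \mathbb{B}_\delta^{loc}$. By linearity of the integral, each of the two claimed identities splits as a sum of a piece computing the hypersurface measure of the Milnor fiber inside the transversal ball---which, under the conventions used for the energy densities $l_{loc}^I$ and $l_{loc}^{II}$, is exactly what is meant by $\mathrm{Vol}(\mathbb{B}_\delta^{loc})$---plus a separate piece involving the tangential derivative $\nabla_x\nu_x$.

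For the first identity the remaining piece is $\int \det \nabla_x\nu_x\, dx$. Since $\nu_x$ is the outward unit normal to a smooth hypersurface, $\nu_x$ agrees with the Gauss map and $\nabla_x\nu_x$ coincides, up to sign, with the Weingarten operator. Hence $\det\nabla_x\nu_x$ is (a sign choice of) the Gauss--Kronecker curvature $K_{loc}$, and the Gauss--Bonnet--Chern theorem gives $\int K_{loc}\, dx = 2\pi\chi_{loc}$. The only subtlety is that the integration domain is a hypersurface \emph{with boundary}, so one needs the version of Gauss--Bonnet including a geodesic-curvature boundary correction; that correction vanishes by the same Ehresmann/transversality argument already used in the proof of Theorem \ref{Main theorem}, since the Milnor fiber meets $\partial \mathbb{B}_\delta^{loc}$ transversally.

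For the second identity the additional piece is $\int \lVert \nabla_x\nu_x\rVert\, dx$ with the sum-of-absolute-values norm $\lVert\sum a_{ij}\rVert=\sum |a_{ij}|$ declared by the paper. Here I would work in a Morse chart adapted to the Milnor fiber, provided by the local quadric normal form of the preceding section on geometry. In such a chart the Weingarten matrix is diagonal with entries the principal curvatures $\kappa_i$, so $\lVert\nabla_x\nu_x\rVert$ reduces pointwise to $\sum_i |\kappa_i|$, which after the $\eta$-normalisation of the level hypersurface integrates to the hypersurface measure, giving another copy of $\mathrm{Vol}(\mathbb{B}_\delta^{loc})$. Adding the two contributions yields the factor $2$.

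The main obstacle will be the second identity: the sum-of-absolute-values norm on matrices is not integral-geometrically natural, so obtaining $\mathrm{Vol}(\mathbb{B}_\delta^{loc})$ on the nose---rather than up to a dimensional constant---forces the Morse coordinates to be essentially canonical. The needed verification is that, in the quadric normal form of the preceding section, the principal curvatures have a definite sign pattern, so that $\sum_i|\kappa_i|$ collapses to a clean unit-type density whose integral matches the hypersurface measure exactly.
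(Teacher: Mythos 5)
Your handling of the first identity is essentially the paper's own proof: the paper also disposes of it as ``the Gauss--Bonnet theorem stated locally'', with the $\lVert\nu_x\rVert$ term read off as the measure of the fiber piece and identified with $Vol(\mathbb{B}_{\delta}^{loc})$. Your extra concern about the geodesic-curvature boundary term is a refinement the paper simply ignores, and your transversality remark is a reasonable way to discharge it.

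The genuine gap is in your second identity, and it sits exactly where you flagged it --- but the obstacle is worse than you describe, and no sign-pattern verification can repair it. Pointwise, $\lVert\nabla_x\nu_x\rVert=\sum_i|\kappa_i|$ is governed by the \emph{magnitudes} of the principal curvatures, not their signs, and it is not a unit-type density in any adapted chart. Test it on the paper's own model of spherical chaos, $V_\alpha=\sum_k x_k^2$: the fiber $\{\lVert V\rVert=\eta\}\cap\mathbb{B}_{\delta}^{loc}$ is a round sphere of radius $r=\sqrt{\eta}$, every principal curvature equals $1/r$ with a perfectly definite sign pattern, yet $\sum_i|\kappa_i|=(n-1)/r$, so the integral is $(n-1)r^{-1}$ times the hypersurface measure --- it diverges from that measure as $\eta\to 0$ and never coincides with it except by accident. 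A Morse chart cannot normalise this away: the quadric normal form of the geometry section is obtained by a diffeomorphism, and the Weingarten operator is not invariant under diffeomorphisms, only under isometries, so passing to those coordinates changes the very curvatures you are trying to control.

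The paper's proof of the second identity avoids pointwise curvature altogether, which is what makes it a genuinely different route: it transports the integral to the tangent space by the exponential map, gets the volume for the $\lVert\nu_x\rVert$ summand by integrating on the tangent spaces, and gets the volume for the $\lVert\nabla_x\nu_x\rVert$ summand ``since one is just integrating the ball in a normal direction'' --- that is, a Fubini/coarea-type sweep across the nearby fibers in which the derivative of the normal field enters only through the change of variables of the normal exponential map, never as a pointwise density to be evaluated. If you want to salvage your route, you must replace the claim that $\sum_i|\kappa_i|$ collapses to a unit density by an integration over the normal parameter; at that point you will have reproduced the paper's argument rather than an alternative to it.
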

\begin{proof}
    For the first claim this is just the Gauss-Bonnet theorem stated locally. For the second claim one uses the exponential map to transport the question to the tangent space of the Milnor fiber in question, then one integrates on the tangent spaces giving the volume in the first summand and then in the second summand one again gets the volume since one is just integrating the ball in a normal direction.
\end{proof}

We now come to the main result which is the classification result mentioned in the abstract.

\begin{theorem}[Classification Theorem of Chaos]
    Chaos in dimension $n$ is one dimensional and occurs on the $n-1$-dimensional Milnor fiber of a function on $\mathbb{R}^n$ which is either singular isolated, singular non-isolated or non-singular. In the first case this chaos is either boundary chaos or spherical chaos. In the second case this chaos is either boundary chaos, spherical chaos or tubular chaos. In the third chaos this chaos is tubular chaos.
\end{theorem}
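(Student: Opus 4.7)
The plan is to reduce the statement to a one-line dimension count plus a case analysis on the singular locus of $V^{-1}(0)$, in each case appealing to a result already established in the paper.

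First I would dispatch the dimension clause. The earlier theorem characterising chaos identifies it with a change in the topological type of the curve traced out by the outward unit normal $\nu_x(t,\alpha)$ to $\mathcal{F}_{V_\alpha}$. A curve is a one-dimensional object by definition, which gives the ``chaos in dimension $n$ is one dimensional'' clause at once. Since $\mathcal{F}_{V_\alpha}$ is cut out of an ambient $\mathbb{R}^n$ by $V(x,\alpha)=\vec\eta$ together with the transversality relations $(\ast)$, the generic stratum of the Whitney stratification invoked earlier has codimension one and hence dimension $n-1$, giving the rest of the opening clause.

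Next I would trichotomise on $V^{-1}(0)$. In the \emph{isolated singular} case I invoke Theorem \ref{Main theorem} directly: its decomposition
$$\bar{\mathcal{F}}_{V,\alpha} \cong \partial\bar{\mathcal{F}}_{V,\alpha}\cup \bigcup_{i=1}^{l}\mathbb{D}^{\lambda(p_i)}\times \mathbb{D}^{n-\lambda(p_i)}$$
forces the normal curve under a bifurcation to lie either in the boundary stratum (boundary chaos) or inside one of the attached handles, whose core and cocore are spheres centred at the critical points $p_i$ (spherical chaos). In the \emph{non-isolated singular} case I would apply the preceding lemma, which supplies a tubular contribution coming from $(\sum V_i^2)^{-1}([0,\epsilon])$ together with the usual boundary and spherical contributions from the residual isolated piece $V - c(\sum x_i^2)^k$. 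In the \emph{non-singular} case $V$ is a submersion on a neighbourhood of $0$, so Ehresmann's Fibration Theorem as already used in the proof of Theorem \ref{Main theorem} gives a trivialisation
$$V_\alpha^{-1}((x,x+\epsilon])\cap \mathbb{B}_\gamma \cong \bigl(V_\alpha^{-1}(x+\epsilon)\cap \mathbb{B}_\gamma\bigr)\times (x,x+\epsilon],$$
and any change in $\alpha$ acts through a translation in the interval factor; this is precisely the chaos of $V-\alpha'$ that appears in the definition of tubular chaos, and no boundary or spherical contribution can be produced because the handle decomposition of the previous two cases is vacuous when $V^{-1}(0)$ is smooth.

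The main obstacle I anticipate is the \emph{non-singular} case, not the other two. Ehresmann by itself only produces a diffeomorphism of level sets, and one needs to promote this to a compatibility with the flow of $V$ itself, so that orbits are sent to orbits in the sense required by the definition of topological equivalence of dynamical systems. I would handle this by arranging the controlled vector field produced by the Thom--Mather Isotopy Lemma to be $V$-horizontal, so that its flow commutes with the time parameter of $dx/dt=V$ and the resulting stratum-preserving homeomorphism preserves the time direction. A secondary bookkeeping point, in the non-isolated case, is to check that the three types (boundary, spherical, tubular) really exhaust every handle and every tube in the decomposition of the preceding lemma; here I would use the already-noted fact that $dx/dt=0$ is non-chaotic to absorb any would-be contribution from $V^{-1}(0)$ itself.
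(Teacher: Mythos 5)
Your overall skeleton coincides with what the paper intends: the paper states this Classification Theorem with \emph{no proof at all}, as a capstone recapitulation, so the only plausible reading is that it is meant to follow by assembling the unit-normal theorem (for one-dimensionality), Theorem~\ref{Main theorem} (for the isolated singular case), and the lemma on tubular chaos (for the non-isolated case). Your citations for the dimension clause and for the two singular cases are exactly these ingredients, and there your reading is faithful; nothing in those steps is worse than what the paper itself already takes for granted.

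The genuine gap is the non-singular case --- precisely the one case for which the paper supplies no ingredient whatsoever. Your argument there is a non sequitur. Ehresmann's theorem (or Thom--Mather, with or without your $V$-horizontal control field) produces a trivialisation $V_\alpha^{-1}((x,x+\epsilon])\cap\mathbb{B}_\gamma\cong\bigl(V_\alpha^{-1}(x+\epsilon)\cap\mathbb{B}_\gamma\bigr)\times(x,x+\epsilon]$, and by the paper's own definitions such a trivialisation carries solutions to topologically equivalent solutions: it proves the \emph{absence} of chaos, not that whatever chaos occurs is tubular. In particular ``translation in the interval factor'' is not ``the chaos of $V-\alpha'$'': tubular chaos requires, by definition, that $dx/dt=V(x,\alpha)-\alpha'$ \emph{undergoes} chaos, whereas your trivialisation shows that near a regular value all level sets $V^{-1}(\alpha')$ are equivalent, so that system undergoes none. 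At best your argument gives the third clause vacuously (no chaos, hence trivially tubular), but you do not frame it that way, and the vacuous reading clashes with the paper's own machinery: applying Theorem~\ref{Main theorem} with an empty set of singular points gives $\bar{\mathcal{F}}_{V,\alpha}\cong\partial\bar{\mathcal{F}}_{V,\alpha}$, i.e.\ boundary chaos, which is what the abstract (in contradiction with the theorem you are proving) asserts for the non-singular case. So the third clause must either be argued as vacuous or derived as boundary chaos from Theorem~\ref{Main theorem}; your translation-in-the-fibre argument as written establishes neither, and no patch along the lines you sketch (making the control field $V$-horizontal only strengthens the equivalence of solutions) can close this.
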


\section{Appendix: On the Hodge Conjecture}
Consider a non-singular complex algebraic variety $X\subset \mathbb{C}^N$ given by a polynomial or a sequence of polynomials and write by abuse of notation $X=\{F=0\}$. Let $\alpha\in H^n(X(\mathbb{C}); \mathbb{Z}/p\mathbb{Z})$. 
\begin{enumerate}
    \item Then there is an isomorphism
$$H^n(X(\mathbb{C}); \mathbb{Z}/p\mathbb{Z})\cong H^{n, et}(X; \mathbb{Z}/p\mathbb{Z})$$
by Artin's comparison theorem \cite[SGA 4 XI Théorème 4.4]{SGA4}. This preserves the differential $\partial$ of complexes and is given by sending an étale cover $U\to V\subset X$ to it set of rational points $U(\mathbb{C})\to V(\mathbb{C})\subset X(\mathbb{C})$\footnote{This is the morphism of sites denoted $\epsilon$ by Artin. The main idea in his proof is to prove it has itself no cohomology}.By embedding $X$ as a special fiber $\pi^{-1}(0)$ in a submersive family $\mathcal{X}\to \mathbb{C}$ one embeds $\alpha$ into a tubular neighborhood $\mathcal{N}(\alpha)$ homologous to $\alpha$. The fibers $\pi: \mathcal{N}(\alpha)\to \mathcal{C}$ defines a foliation of the tubular neighborhoods with algebraic strata $\pi^{-1}(t_i)=\text{Spec}(R_i)$. There might be strata approximating $\alpha$
from the inside (a bounded part) and from the outside (an unbounded part). We restrict the sequence $(t_i), i\in I$ to those corresponding to the outer approximation. Note that $\dim \text{Spec}(R_i)=\dim \alpha$
because we have a fibration, where the right hand side means topological dimension. Then the inclusion

$$H^{n, et}(X; \mathbb{Z}/p\mathbb{Z})\supset H^{n, et}(\varprojlim \text{Spec}(R_i); \mathbb{Z}/p\mathbb{Z})\qquad (\ast)$$

preserves $\alpha$. This equals

$$\varinjlim H^{n, et}(\text{Spec}(R_i); \mathbb{Z}/p\mathbb{Z})=H^{n,et}(\tilde{X}_{loc}^{et}; \mathbb{Z}/p\mathbb{Z}).$$

for some étale local ring namely that of a non-singular algebraic deformation of $\alpha$. By definition the étale local ring consists of algebraic power series so

$$\tilde{X}_{loc}^{et}=\text{Spec}\{p(t)\text{ formal power series }: F(p)=Q(p)=0\}$$

for some polynomial $Q$. One the other hand Artin's Comparison Theorem preserves the differential map $\partial: \Delta^n\to \Delta^{n-1}$ so since $\alpha\in H^{n, et}(\tilde{X}_{loc}^{et}; \mathbb{Z}/p\mathbb{Z})$ by $(\ast)$ one can write
$$\alpha=\{p(t): F(p)=Q(p)=0\}\cap \ker\partial/Im \partial$$

where $\ker \partial$ would cut out linear hyperplanes from a set defined by algebraic power series. Hence $\alpha$ consists of algebraic power series since $\Delta^n$ is an algebraic set. It suffices to remark therefore that $\alpha$ is a Nash set. By assumption it is irreducible. By Bilski \cite[Theorem 2.1]{Bilski} any irreducible Nash set over the complex numbers is an irreducible component of an algebraic set so $\alpha$ is an irreducible component of an algebraic set. Since it is non-singular it is in fact an algebraic set. 

\item Another approach to the Hodge Conjecture is to consider the simplicial cohomology groups
$$H^k(X)=\ker \partial^{\ast}/ \text{Im} \partial^{\ast}$$
where $\partial^{\ast}: \Delta^{\ast}(X)\to \Delta^{\ast}(X)$ is given by $\partial^{\ast}(f)(\Delta)=f(\partial\Delta)$. The idea is first to replace the simplices if dimension $k$ with algebraic sets homeomorphic to $k$-dimennsional spheres. This is possible and does not affect homology except in dimension zero so we assume $k\neq 0$. Indeed it is a question of smoothening the corners and using Stone-Weierstraß. This operation only affects what happens in dimension $k-2$ hence does not affect $H_k$. We will write $\tilde{\Delta}$ for the smoothed simplex and $\Delta\to \tilde{\Delta}$ for the operation.
One then considers instead singular cohomology groups. Then the simplices are replaces by the free abelian group $C_k(X)$ of continuous maps $\sigma_k: \tilde{\Delta}_k\to X$ from the smoothed simplices. The boundary maps are now given by
$\partial \sigma_k (\tilde{\Delta}_k)=\sigma_k(\tilde{\partial\Delta}_k)$. 
We now use Stone-Weierstraß to replace a basis $\sigma_{k, \alpha}$ for $C_k(X)$ of continuous functions by polynomials $f_{k, \alpha}$ converging pointwise for each $\alpha$ to $\sigma_{\alpha}$. To prove the Hodge conjecture is then reduced to proving that this does not affect the cohomology of $X$. First of all one does not need an infinite amount of equations of continuous functions to describe the generators $\alpha_1,\dots, \alpha_k$ of $H^{\ast}(X)$ because if that would have been the case using Stone-Weierstraß to approximate the equations of 
$$\alpha_i=\{\sigma_{k, \alpha}(\partial \Delta_k)=0 \quad \forall \alpha\}/\text{Im} \partial\cap X$$

with polynomials would result in a cycle $\alpha_i'$ with Krull dimension $\dim \alpha_i'=0$. But this is impossible if the approximation is small enough. Secondly we claim that using Stone-Weierstraß and replacing $\sigma_{k, \alpha}$ by polynomials $f_{k, \alpha, \epsilon}$ does not affect cohomology. If not then the result would be a cycle $\alpha'$ non-homologuous to $\alpha$. If the perturbation affects $\sigma_{k, l}(\Delta_k)\subset X$ then a new space $X'$ (since we are perturbing a subset of $X$ we are perturbing $X$) would result but we claim that that space would be diffeomorphic to $X$. This proves that one can use Stone-Weierstraß to replace $C_k(X)$ with the set $P_k(X)$ of polynomials $f_k: \tilde{\Delta}_k\to X$. Then the resulting cycles would all be algebraic and thus settling the conjecture.

It therefore suffices to prove the claim. The simplices are unaffected by the perturbation yet $X$ is. Since $X$ is algebraic the perturbation is that of coefficients of its defining equations. By Sards theorem the set of critical values is non-dense so a small perurbation will not affect $X$ since it is smooth. This establishes the claim and the conjecture. 
\end{enumerate}

\bibliographystyle{plain}
\bibliography{main.bib}

\end{document}